\documentclass[12pt]{amsart}

\usepackage{amsmath,amsfonts,amssymb,mathabx,shuffle,latexsym}
\usepackage{enumitem}
\usepackage[usenames, dvipsnames]{xcolor}
\usepackage[backref=page]{hyperref}
\usepackage{ytableau}
\usepackage{tikz-cd}
\usepackage{tikz}
\usetikzlibrary{calc, shapes, backgrounds,arrows,positioning,plotmarks}
\tikzset{>=stealth',
  head/.style = {fill = white, text=black},
  plaque/.style = {draw, rectangle, minimum size = 10mm}, 
  pil/.style={->,thick},
  junct/.style = {draw,circle,inner sep=0.5pt,outer sep=0pt, fill=black}
  }
  
\begin{filecontents}{LP.data}
#MOPS 	Power [mW]
1	8
2	9
3	8
4	7
5	8
6	9
7	8
8	9
9 10
10 9
11 8
12 9 
13 10
14 9
15 8
16 9
17 8
18 7
19 8
20 9 
21 10
22 9
23 8
\end{filecontents}

\setlength{\evensidemargin}{0in} 
\setlength{\textheight}{8.5in}      
\setlength{\textwidth}{6in}    
\setlength{\topmargin}{0in}      
\setlength{\oddsidemargin}{0in}


\newtheorem{theorem}{Theorem}[section]
\newtheorem{lemma}[theorem]{Lemma}
\newtheorem{proposition}[theorem]{Proposition}

\newtheorem{conjecture}[theorem]{Conjecture}

\theoremstyle{definition}

\newenvironment{example}
  {\pushQED{\qed}\examplex}
  {\popQED\endexamplex}

\theoremstyle{remark}
\newtheorem{remark}[theorem]{Remark}

\numberwithin{equation}{section}


\newcommand{\wt}{\ensuremath{\mathrm{wt}}}

\newcommand{\inc}{\ensuremath{\mathrm{Inc}}}
\newcommand{\pro}{\mathcal{P}}
\newcommand{\evac}{\mathcal{E}}
\newcommand{\rot}{\ensuremath{\mathsf{rot}}}
\newcommand{\Frame}{\ensuremath{\mathsf{Frame}}}

\newcommand{\LIS}{\ensuremath{\mathsf{LIS}}}
\newcommand{\swap}{\ensuremath{\mathsf{swap}}}
\newcommand{\ein}{\ensuremath{\mathsf{In}}}
\newcommand{\eout}{\ensuremath{\mathsf{Out}}}
\newcommand{\decr}{\ensuremath{\mathsf{Decr}}}

\newcommand{\bulltab}{\ensuremath{\mathsf{BulletTableaux}}}
\newcommand{\slide}{\ensuremath{\mathsf{slide}}}
\newcommand{\rep}{\ensuremath{\mathsf{Rep}}}

\newcommand{\shape}{\ensuremath{\mathsf{sh}}}
\newcommand{\id}{\ensuremath{\mathrm{id}}}
\newcommand{\bbb}{\ensuremath{\mathsf{b}}}
\newcommand{\dist}{\ensuremath{\mathrm{Dist}}}
\newcommand{\rank}{\ensuremath{\mathrm{rank}}}

\begin{document}


\title[Promotion of increasing tableaux]{Promotion of increasing tableaux: Frames and homomesies}  

\author[O. Pechenik]{Oliver Pechenik}
\address[OP]{Department of Mathematics, Rutgers University, Piscataway, NJ 08854}
\email{oliver.pechenik@rutgers.edu}


\date{\today}


\keywords{Increasing tableau, promotion, $K$-theory, homomesy, frame}

\begin{abstract}
A key fact about M.-P.~Sch\"utzenberger's (1972) promotion operator on rectangular standard Young tableaux is that iterating promotion once per entry recovers the original tableau. For tableaux with strictly increasing rows and columns, H.~Thomas and A.~Yong (2009) introduced a theory of $K$-jeu de taquin with applications to $K$-theoretic Schubert calculus. The author (2014) studied a $K$-promotion operator  $\pro$ derived from this theory, but showed that the key fact does not generally extend to $K$-promotion of such increasing tableaux.

Here we show that the key fact holds for labels on the boundary of the rectangle. That is, for $T$ a rectanglar increasing tableau with entries bounded by $q$, we have $\Frame(\pro^q(T)) = \Frame(T)$, where $\Frame(U)$ denotes the restriction of $U$ to its first and last row and column. Using this fact, we obtain a family of homomesy results on the average value of certain statistics over $K$-promotion orbits, extending a $2$-row theorem of J. Bloom, D. Saracino, and the author (2016) to arbitrary rectangular shapes.
\end{abstract}

\maketitle

%
\section{Introduction}
%
\label{sec:introduction}

An important application of the theory of standard Young tableaux is to the product structure of the cohomology of Grassmannians. Much attention in the modern Schubert calculus has been devoted to the study of analogous problems in $K$-theory (see \cite[\textsection1]{Pechenik.Yong:genomic} for a partial survey of such work). In particular, H.~Thomas and A.~Yong \cite{Thomas.Yong:K} gave a $K$-theoretic Littlewood-Richardson rule by developing a combinatorial theory of \emph{increasing tableaux} as a $K$-theoretic analogue of the classical theory of standard Young tableaux. Their Littlewood-Richardson rule and the associated combinatorics has since been extended to the other \emph{minuscule flag varieties} \cite{Clifford.Thomas.Yong,Buch.Ravikumar,Buch.Samuel} and into \emph{torus-equivariant $K$-theory} \cite{Thomas.Yong:HT,Pechenik.Yong:KT}.

The theory of increasing tableaux is moreover of independent combinatorial interest. Various enumerative combinatorics results have recently been obtained \cite{Pechenik, Pressey.Stokke.Visentin, GMPPRST}; as well as applications to the studies of \emph{combinatorial Hopf algebras} \cite{Patrias.Pylyavskyy}, \emph{longest increasing subsequences of random words} \cite{Thomas.Yong:Plancherel}, \emph{plane partitions} \cite{DPS,HPPW}, and \emph{combinatorial representation theory} \cite{Rhoades:skein}. This paper continues the study begun in \cite{Pechenik} of the $K$-promotion operator on increasing tableaux, a $K$-theoretic analogue of M.-P.~Sch\"utzenberger's \cite{Schutzenberger} classical promotion operator.

We systematically identify a partition $\lambda$ with its Ferrers diagram in English orientation. An {\bf increasing tableau} of shape $\lambda$ is a filling of $\lambda$ by positive integers such that entries strictly increase from left to right across rows and from top to bottom down columns. We write $\inc^q(\lambda)$ for the set of all increasing tableaux of shape $\lambda$ with entries bounded above by $q$.
Using the $K$-theoretic jeu de taquin of \cite{Thomas.Yong:K}, one has a $K$-promotion operator $\pro$ on increasing tableaux \cite{Pechenik} by analogy with M.-P.~Sch\"utzenberger's classical promotion for standard Young tableaux \cite{Schutzenberger}. We describe this operator in detail in Section~\ref{sec:frames}.

The operation of ($K$-)promotion is of particular interest for tableaux of rectangular shapes. For a standard Young tableau $T$ of shape $m\times n$, one has that $\pro^{mn}(T) = T$ (cf.~\cite{Haiman}); indeed, one can completely enumerate the orbits by size in this case via the \emph{cyclic sieving phenomenon} \cite{Rhoades:thesis}. For increasing tableaux, on the other hand, orbits can be much larger than the cardinality $q$ of the alphabet \cite[Example~3.10]{Pechenik}. In general, no upper bound is known on the cardinality of the $K$-promotion orbit of a increasing tableau, even of rectangular shape.

\begin{example}\label{ex:huge_orbit}
Consider the increasing tableau 
\[
T  = \ytableaushort{134578{11}{13}{14}{17},247{10}{12}{13}{15}{17}{19}{21},369{12}{13}{14}{16}{18}{21}{24},68{11}{15}{20}{22}{23}{24}{25}{26}} \in \inc^{26}(4 \times 10).
\]
Although one might naively expect the cardinality of its $K$-promotion orbit to divide $26$ by analogy with the standard Young tableau case, in fact the orbit of $T$ has size $1222 = 26 \cdot 47$.
\end{example}

The {\bf frame} of a partition $\lambda$ is the set $\Frame(\lambda)$ of all boxes in the first or last row, or in the first or last column. For $T \in \inc^q(\lambda)$, we write $\Frame(T)$ for the labeling of $T$ restricted to $\Frame(\lambda)$.

Our first main result is the following:
\begin{theorem}\label{thm:framesame}
Let $T \in \inc^q(m \times n)$. Then \[\Frame(T) = \Frame(\pro^q(T)).\]
\end{theorem}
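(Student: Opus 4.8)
My plan is to track the trajectory of the boundary labels explicitly through the $q$ applications of $K$-promotion, mimicking the classical argument for standard Young tableaux but accounting for the fact that in $K$-jeu de taquin a single label can split across several boxes during a slide. Recall that $\pro(T)$ is computed by deleting all $1$'s, performing a $K$-rectification of the resulting hole(s) toward the bottom-right, and then decrementing all entries (filling the vacated outer corner region with $q$'s, appropriately). The key structural observation I would try to establish is a \emph{boundary locality} statement: the restriction of a $K$-jeu de taquin slide to the first row, last row, first column, and last column depends only on the frame of the tableau together with the positions of the holes on the frame — in other words, $K$-promotion acts on $\Frame(T)$ in a way that does not see the interior. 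This is plausible because along each edge of the rectangle the $K$-slide degenerates essentially to a one-dimensional slide (a hole on the first row can only swap with the label immediately to its right along that row, etc.), with the only interaction between frame and interior happening at the four corners.

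The main steps, in order: \textbf{(1)} Set up the description of $\pro$ from Section~\ref{sec:frames} restricted to the four boundary segments, and verify the boundary-locality claim: if $U$ and $U'$ agree on $\Frame(m\times n)$ and have holes in the same frame positions, then after one $K$-slide step they still agree on the frame. \textbf{(2)} Deduce that the sequence $\Frame(T), \Frame(\pro(T)), \Frame(\pro^2(T)), \dots$ is governed by an autonomous dynamical system on framings — essentially promotion of the ``frame word'' read around the boundary, a cyclic word of length roughly $2(m+n)-4$ whose entries increase along each of the four segments. \textbf{(3)} Analyze that one-dimensional system directly: show that iterating its promotion $q$ times returns the original frame. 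Here I expect the cleanest route is to note that the frame decomposes into the first row (a strictly increasing word), last column, last row, first column, glued at corners, and that $K$-promotion restricted to such a ``hook/loop'' configuration behaves like ordinary promotion on standard-like one-row tableaux — where the classical ``iterate once per letter'' fact does hold — with the corner entries $1$ (in the top-left) and $q$ (nowhere on the frame except possibly the bottom-right corner) providing the needed anchoring. \textbf{(4)} Combine: $\Frame(\pro^q(T)) = \pro_{\text{frame}}^q(\Frame(T)) = \Frame(T)$.

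The hard part will be Step~(1)–(2): making the boundary-locality claim precise and correct in the presence of $K$-theoretic hole-splitting. In classical jeu de taquin a hole is a single box and edge behavior is transparent; in $K$-jeu de taquin a ``hole'' is a more delicate object (a set of labels marked for deletion, which can merge and split), so I must check that no splitting event can push interior information onto the frame or, conversely, that the frame's evolution is insensitive to how splits resolve in the interior. I would handle this by a careful case analysis at the corners — the only places where a frame box has two frame-neighbors from different segments — showing that the local $K$-slide rule at a corner box, given the frame data, has a forced outcome on the frame regardless of the interior label involved in the swap. A secondary obstacle is bookkeeping the decrement/refill step of $\pro$ on the frame: I must confirm that the new $q$'s introduced at the end of each promotion land on frame boxes exactly where the classical argument predicts, so that the autonomous frame system really is (a $K$-analogue of) ordinary promotion and inherits its order-$q$ behavior. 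Once boundary locality is nailed down, Step~(3) should follow from the classical rectangular result applied segment-by-segment, or by a direct induction on $q$.
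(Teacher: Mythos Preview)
Your central claim --- Step~(1)/(2), that one application of $\pro$ acts on $\Frame(T)$ in a way that depends only on $\Frame(T)$ and not on the interior of $T$ --- is false, and the rest of the plan collapses with it. A small counterexample: take $q=10$, $m=n=4$, and
\[
T_A=\ytableaushort{1235,2347,4569,678{10}},\qquad
T_B=\ytableaushort{1235,2467,4579,678{10}}.
\]
These two tableaux have the same frame. A direct computation gives
\[
\pro(T_A)=\ytableaushort{1234,2356,3478,569{10}},\qquad
\pro(T_B)=\ytableaushort{1246,3458,4679,579{10}},
\]
whose frames disagree in many boxes (e.g.\ position $(1,3)$, or $(2,1)$, or $(4,2)$). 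So the frame does \emph{not} evolve autonomously under a single $K$-promotion: interior entries genuinely feed into the boundary through the splitting and merging of bullets during the $K$-slide. Your proposed case analysis at the corners cannot repair this, because the leakage already happens along the interior of an edge (the value that lands in the last column at row $i$ depends on the interior path of the bullet).

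The paper avoids this obstacle entirely by working globally rather than step by step. It factors $\pro^q=\evac^*\circ\evac$ and proves that $\Frame(\evac(T))=\Frame(\rot(T))$ (and likewise for $\evac^*$). The key input is the Thomas--Yong result that the first row (resp.\ column) length of any $K$-rectification of a word equals its longest strictly increasing (resp.\ decreasing) subsequence; this pins down the first row and first column of $\evac(T)$, and the last row and column follow by applying the same argument to $\evac(T)$ and using $\evac^2=\id$. Composing, $\pro^q(T)=\evac^*(\evac(T))$ has the same frame as $\rot(\rot(T))=T$. No single-step locality is asserted or needed; the argument lives at the level of rectification shapes rather than slide-by-slide bullet tracking.
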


\begin{example}
Let $T$ be as in Example~\ref{ex:huge_orbit}. Then 
\[
\pro^{26}(T) = \ytableaushort{{\bf 1}{\bf 3}{\bf 4}{\bf 5}{\bf 7}{\bf 8}{\bf 11}{\bf 13}{\bf 14}{\bf 17},{\bf 2}{\bf 4}{6}{7}{10}{12}{14}{15}{\bf 19}{\bf 21},{\bf 3}{\bf 6}{\bf 9}{\bf 12}{\bf 13}{\bf 14}{\bf 16}{\bf 18}{\bf 21}{\bf 24},{\bf 6}{\bf 8}{\bf 11}{\bf 15}{\bf 20}{\bf 22}{\bf 23}{\bf 24}{\bf 25}{\bf 26}},
\] where we have {\bf bolded} all entries that coincide with the corresponding entries of $T$. Note that in accordance with Theorem~\ref{thm:framesame}, every entry of $\Frame(\pro^{26}(T))$ is bold.
\end{example}

\begin{remark}
Since $\Frame(2 \times n) = 2 \times n$, Theorem~\ref{thm:framesame} in particular recovers the author's previous result \cite[Theorem~1.3]{Pechenik} that $\pro^q(T) = T$ for $T \in \inc^q(2 \times n)$.
\end{remark}

The following was conjectured in work with K.~Dilks and J.~Striker \cite[Conjecture~4.12]{DPS}:

\begin{conjecture}\label{conj:3row}
Let $T \in \inc^q(3 \times n)$. Then $T = \pro^q(T)$.
\end{conjecture}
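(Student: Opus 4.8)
The plan is to derive Conjecture~\ref{conj:3row} from Theorem~\ref{thm:framesame} by promoting control of the frame to control of the one remaining row. By Theorem~\ref{thm:framesame}, $\pro^q$ already fixes every entry of $T$ in its first and last rows and first and last columns; since $\Frame(3\times n)=3\times n$ when $n\le 2$, we may assume $n\ge 3$, and then the only entries not yet known to be fixed are those in positions $(2,2),(2,3),\dots,(2,n-1)$, namely the interior of the middle row. As the middle row of an increasing tableau is strictly increasing, it is determined by its set of values, and Theorem~\ref{thm:framesame} already pins down its two ends $T(2,1),T(2,n)$ together with both neighbouring rows; so the entire content of the conjecture is the assertion that the set of values occurring in the middle row of $\pro^q(T)$ equals that of $T$.

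To attack this I would exploit how rigid the $K$-jeu de taquin slide is on a three-row shape. In computing $\pro(T)$ with $T$ of shape $3\times n$, the erased $1$ (if present) becomes a bullet which, together with any bullets spawned by $K$-theoretic ties, stays in rows $1$--$3$; and the instant a bullet reaches the bottom row it merely runs rightward to the unique outer corner $(3,n)$ and exits. Hence all nontrivial behaviour lives in rows $1$ and $2$. I would try to describe the induced action of $\pro$ on the pair (row $1$, row $2$) as a two-row promotion-type process perturbed by a boundary interaction with row $3$, and then run an induction --- on $q$, or on $n$ --- leveraging the fact that the whole frame (all of rows $1$ and $3$ and the ends of row $2$) returns after $q$ steps by Theorem~\ref{thm:framesame}, with the established identity $\pro^q=\id$ on $\inc^q(2\times n)$ (the two-row case of Theorem~\ref{thm:framesame}; see the Remark following it) as a key ingredient. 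A complementary route would be to transport the problem to the rowmotion / toggle-group model of $\inc^q(3\times n)$ from \cite{DPS}, where $\pro$ becomes rowmotion on a three-layer poset, and argue that the middle layer inherits periodicity from the two outer layers --- whose periodicity Theorem~\ref{thm:framesame} now supplies.

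The step I expect to be the main obstacle is closing the loop after \emph{exactly} $q$ iterations. The map $\pro$ restricted to the middle row is not local: a bullet traversing row $2$ interacts with row $3$, and the $K$-theoretic splits and merges occurring in rows $1$ and $2$ can reshuffle the middle-row entries substantially, so the middle row of $\pro(T)$ genuinely depends on all of $T$ and there is no naive reduction to the two-row theorem. The real work is to follow how these non-invertible interior moves act on the middle-row values over a full period and to show the net effect is trivial \emph{because} the frame has returned --- that is, to extract from the frame-periodicity of Theorem~\ref{thm:framesame} a conserved quantity for the middle row (for instance, for each value $v$, a phase recording when $v$ enters or leaves that row, or the column at which the slide path first drops into the last row). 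It is precisely such control that must break down for four or more rows --- witness the orbit of size $1222$ in Example~\ref{ex:huge_orbit} --- so any correct argument has to use the three-row hypothesis essentially, presumably via the fact that there are only three layers and the bottom one trivializes at once.
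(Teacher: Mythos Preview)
The statement you are asked to prove is a \emph{conjecture} in the paper --- the paper does not contain a proof of it, and indeed explicitly presents Theorem~\ref{thm:framesame} only as ``evidence toward Conjecture~\ref{conj:3row}.'' So there is no ``paper's own proof'' to compare against.

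Your proposal is also not a proof: it is a research outline that correctly isolates what remains to be shown (the interior entries $(2,2),\dots,(2,n-1)$ of the middle row) and correctly notes that the cases $n\le 2$ are already covered by Theorem~\ref{thm:framesame}. But after that setup, you yourself identify the main obstacle --- controlling the middle-row values over exactly $q$ iterations --- and do not overcome it. The suggested strategies (viewing the middle row as a perturbed two-row promotion, or passing to the rowmotion/toggle model of \cite{DPS}) are reasonable avenues to explore, but you have not produced the conserved quantity or invariant you say is needed, nor shown that the three-row hypothesis actually forces it. In particular, the claim that ``the instant a bullet reaches the bottom row it merely runs rightward to the unique outer corner $(3,n)$ and exits'' is not quite right in $K$-jeu de taquin: a bullet in row~3 can still interact upward with row~2 via ties, so the bottom row does not trivialize as cleanly as you suggest, and this is precisely where any argument must do real work.

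In short: the conjecture is open in the paper, and your write-up is a plan, not a proof.
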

Theorem~\ref{thm:framesame} may be interpreted as evidence toward Conjecture~\ref{conj:3row}, since Theorem~\ref{thm:framesame} shows that $T$ and $\pro^q(T)$ have the same entries in at least $2n+2$ out of $3n$ pairs of corresponding boxes. 


A set $U$ of objects with a weight function $\wt : U \to \mathbb{C}$ and a group action $G \curvearrowright U$ is said to be {\bf homomesic} if every $G$-orbit $\mathcal{O}$ has the same average weight $\frac{\sum_{x \in \mathcal{O}} \wt(x)}{|\mathcal{O}|}$. This notion was isolated by J.~Propp and T.~Roby \cite{Propp.Roby} in response to observations of D.~Panyushev \cite{Panyushev}, and has since been found in to appear in diverse situations \cite{Einstein.Propp, Hopkins.Zhang,Striker,Rush.Wang,Dong.Wang,EFGJMPR,Joseph.Roby}.

Using Theorem~\ref{thm:framesame}, we obtain our second main result, a family of new homomesies for increasing tableaux. For $T \in \inc^q(\lambda)$ and $S$ a set of boxes in $\lambda$, let $\wt_S(T)$ denote the sum of the entries of $T$ in $S$.
\begin{theorem}\label{thm:homomesy}
Let $S$ be subset of $\Frame(m \times n)$ that is fixed under $180^\degree$ rotation. 
Then $(\inc^q(m \times n), \pro, \wt_S)$ exhibits homomesy with orbit average $\frac{(q+1)|S|}{2}$.
\end{theorem}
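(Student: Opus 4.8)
The plan is to deduce the homomesy directly from Theorem~\ref{thm:framesame} together with a symmetry property of $K$-promotion under $180^\circ$ rotation. First I would recall (or establish) the relevant equivariance: if $\rot$ denotes the operation of rotating a rectangular tableau by $180^\circ$ and replacing each entry $a$ by $q+1-a$, then $\rot$ sends $\inc^q(m\times n)$ to itself, and it intertwines $K$-promotion with $K$-demotion, i.e.\ $\rot \circ \pro = \pro^{-1} \circ \rot$ (equivalently $\pro \circ \rot = \rot \circ \pro^{-1}$). This is the increasing-tableau analogue of the classical statement that evacuation conjugates promotion to its inverse on rectangles; it should follow from the description of $\pro$ via $K$-jeu de taquin to be given in Section~\ref{sec:frames}, since the $K$-jdt slides and the rotation-complementation are compatible in the same way as in the classical case.

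Next, fix a $\pro$-orbit $\mathcal{O} = \{T, \pro(T), \dots, \pro^{N-1}(T)\}$ of size $N$. By Theorem~\ref{thm:framesame}, $\Frame(\pro^q(U)) = \Frame(U)$ for every $U$, so the value $\wt_S(\pro^j(U))$ depends on $j$ only modulo $q$ (using that $S \subseteq \Frame(m\times n)$); in particular $N$ is a multiple of $q$, say $N = kq$, and
\[
\sum_{U \in \mathcal{O}} \wt_S(U) \;=\; k \sum_{j=0}^{q-1} \wt_S(\pro^j(T)).
\]
So it suffices to show that the average of $\wt_S$ over one "period" $\{T, \pro(T), \dots, \pro^{q-1}(T)\}$ equals $(q+1)|S|/2$, independently of $T$.

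For that I would pair the term $\pro^j(T)$ with $\rot(\pro^j(T))$. Since $S$ is fixed by $180^\circ$ rotation and $\rot$ complements entries within a box $b$ by sending the entry $a$ to $q+1-a$ in the rotated box, we get $\wt_S(U) + \wt_S(\rot(U)) = (q+1)|S|$ for every $U$. Now apply the equivariance: $\rot(\pro^j(T)) = \pro^{-j}(\rot(T)) = \pro^{\,q-j}(\pro^{-q}\rot(T))$, and by Theorem~\ref{thm:framesame} (which forces $\Frame$ to be $q$-periodic) we have $\wt_S(\pro^{-q}\rot(T)\text{ shifted by }q-j) = \wt_S(\pro^{\,q-j}(\rot(T)))$ once we only look at frame boxes. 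Hence $\wt_S(\rot(\pro^j(T))) = \wt_S(\pro^{\,q-j}(\rot(T)))$. Summing over $j = 0,\dots,q-1$ and using $\sum_j \bigl(\wt_S(\pro^j(T)) + \wt_S(\rot(\pro^j(T)))\bigr) = q(q+1)|S|$, the two sums $\sum_j \wt_S(\pro^j(T))$ and $\sum_j \wt_S(\pro^{\,q-j}(\rot(T)))$ reindex to $\sum_j \wt_S(\pro^j(T))$ and $\sum_j \wt_S(\pro^j(\rot(T)))$ respectively, so
\[
\sum_{j=0}^{q-1} \wt_S(\pro^j(T)) \;+\; \sum_{j=0}^{q-1} \wt_S(\pro^j(\rot(T))) \;=\; q(q+1)|S|.
\]
Finally, I would observe that $\rot$ preserves $\pro$-orbits as sets (it maps the orbit of $T$ to the orbit of $\rot(T)$, and applying the orbit-sum formula to both), so the two period-sums above are in fact equal to the orbit averages of $\wt_S$ on the (possibly equal) orbits of $T$ and $\rot(T)$; a short bookkeeping argument then yields that the common period-average is exactly $(q+1)|S|/2$.

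The main obstacle I anticipate is not the combinatorial bookkeeping but establishing the rotation–complementation equivariance $\rot\circ\pro = \pro^{-1}\circ\rot$ cleanly in the $K$-theoretic setting, and making sure it interacts correctly with the non-injectivity/non-$q$-periodicity of $\pro$ on increasing tableaux — this is precisely why Theorem~\ref{thm:framesame} is needed, to confine all the arguments to the frame where periodicity does hold. Once that equivariance is in hand, the pairing $U \leftrightarrow \rot(U)$ together with $\wt_S(U) + \wt_S(\rot(U)) = (q+1)|S|$ makes the homomesy essentially automatic.
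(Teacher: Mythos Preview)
There is a genuine gap in your final step. Your argument correctly reaches
\[
\sum_{j=0}^{q-1} \wt_S(\pro^j(T)) \;+\; \sum_{j=0}^{q-1} \wt_S(\pro^j(\rot(T))) \;=\; q(q+1)|S|,
\]
i.e.\ the period-$q$ averages attached to $T$ and to $\rot(T)$ sum to $(q+1)|S|$. But $\rot(T)$ need not lie in the $\pro$-orbit of $T$: the equivariance $\rot\circ\pro=\pro^{-1}\circ\rot$ only tells you that $\rot$ sends the orbit of $T$ bijectively onto the orbit of $\rot(T)$, which may well be a different orbit. So you have merely paired orbits into couples whose averages sum to the target value; you have not shown that each average individually equals $(q+1)|S|/2$. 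The ``short bookkeeping argument'' you promise would have to establish $\sum_j \wt_S(\pro^j(T)) = \sum_j \wt_S(\pro^j(\rot(T)))$, and nothing in your setup supplies this. (A smaller issue: your claim that the orbit size $N$ is a multiple of $q$ is false---already for $m=2$ one has $N\mid q$, not $q\mid N$---but this is harmless: Theorem~\ref{thm:framesame} makes the frame $q$-periodic, so the orbit average and the period-$q$ average of $\wt_S$ agree regardless of any divisibility between $N$ and $q$.)

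The paper closes exactly this gap, and doing so is the main content of Section~\ref{sec:homomesy}. The key extra ingredient is Proposition~\ref{prop:dist}, proved via a lattice-path argument on the $K$-theoretic growth diagram, which shows that the multiset $\dist(T,\bbb)=\{\wt_{\{\bbb\}}(\pro^k(T)):0\le k<q\}$ is invariant under $K$-evacuation: $\dist(T,\bbb)=\dist(\evac(T),\bbb)$. Combined with Proposition~\ref{prop:frame} (that $\evac$ and $\rot$ agree on the frame) and Lemma~\ref{lemma:GD_facts}(c), this yields
\[
\sum_{j} \wt_{\{\bbb\}}(\pro^j(T)) \;=\; \sum_{j} \wt_{\{\bbb\}}(\rot(\pro^j(T))) \;=\; \sum_{j}\bigl(q+1-\wt_{\{\bbb^*\}}(\pro^j(T))\bigr)
\]
for the \emph{same} $T$, whence $\sum_j \wt_{\{\bbb,\bbb^*\}}(\pro^j(T))=q(q+1)$ orbit by orbit. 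In effect, the paper replaces your rotation pairing $U\leftrightarrow\rot(U)$ (which may leave the orbit) by an evacuation-based identity that stays inside a single orbit; proving that evacuation preserves $\dist$ is precisely the missing idea in your approach.
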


\begin{remark}
The case $m=2$ of Theorem~\ref{thm:homomesy} was previously proved by J.~Bloom, O.~Pechenik and D.~Saracino \cite[Theorem~6.5]{BPS} using results from \cite{Pechenik}. 

The analogue of Theorem~\ref{thm:homomesy} for (semi)standard Young tableaux was conjectured by J.~Propp and T.~Roby \cite{Propp.Roby:dartmouth} and proved by J.~Bloom, O.~Pechenik and D.~Saracino \cite[Theorem~1.1]{BPS}. In fact, for (semi)standard Young tableaux, $S$ need not be contained in $\Frame(m \times n)$. However, \cite[Example~6.4]{BPS} shows that for increasing tableaux a generalization of Theorem~\ref{thm:homomesy} without the condition $S \subseteq \Frame(m \times n)$ would be false.
\end{remark}

\section{$K$-jeu de taquin and frames of increasing tableaux}\label{sec:frames}
The section culminates in a proof of Theorem~\ref{thm:framesame}. First we recall the $K$-jeu de taquin of H.~Thomas and A.~Yong \cite{Thomas.Yong:K}, the key ingredient in the operation of $K$-promotion on increasing tableaux. While $K$-promotion can be defined without a full development of $K$-jeu de taquin, we will need $K$-jeu de taquin in the proof of Theorem~\ref{thm:framesame}.

\subsection{$K$-jeu de taquin}
Let $\bulltab(\nu / \lambda)$ denote the set of all fillings of the skew shape $\nu / \lambda$ by positive integers and symbols $\bullet$.
For each positive integer $i$, we define as follows an operator $\swap_i$ on $\bulltab(\nu / \lambda)$. Let $T \in \bulltab(\nu / \lambda)$ and consider the boxes of $T$ that contain either $i$ or $\bullet$. The set of such boxes decomposes into edge-connected components. On each such component that is a single box, $\swap_i$ does nothing. On each nontrivial component, $\swap_i$ simultaneously replaces each $i$ by $\bullet$ and each $\bullet$ by $i$. The resulting element of $\bulltab(\nu / \lambda)$ is $\swap_i(T)$. 

\begin{example}
\pushQED{\qed}
Consider 
\[ 
T = \ytableaushort{4 7 3 \bullet 2 2,1 2 \bullet 2, \bullet 3} \in \bulltab \left( \ytableausetup{smalltableaux} \raisebox{3mm}{\ydiagram{6,4,2}} \right). \ytableausetup{nosmalltableaux}
\]
In computing $\swap_2(T)$, one looks at two connected components. The southwest component is a single box containing $\bullet$ and is unchanged by $\swap_2$. The other component has six boxes. Hence 
\[
\swap_2(T) = \ytableaushort{4 7 3 2 \bullet \bullet, 1 \bullet 2 \bullet, \bullet 3}. \qedhere \popQED
\] \let\qed\relax
\end{example}

For a box $\bbb$ in a partition, we write $\bbb^\rightarrow$ for the box immediately right of $\bbb$ in its row, $\bbb^\downarrow$ for the box immediately below $\bbb$ in its column, etc.

Consider a skew shape $\nu / \lambda$. An {\bf inner corner} of $\nu/\lambda$ is a box $\bbb \in \lambda$ such that $\bbb^\rightarrow \notin \lambda$ and $\bbb^\downarrow \notin \lambda$. For $I$ any nonempty set of inner corners of $\nu / \lambda$ and $T \in \inc^q(\nu / \lambda)$, let $\ein_I(T)$ be the extension of $T$ formed by adding a $\bullet$ to each box of $I$. Note that $\ein_I(T) \in \bulltab(\nu / \theta)$ for some $\theta \supset \lambda$.

An {\bf outer corner} of $\nu / \lambda$ is a box $\bbb \in \nu / \lambda$ such that $\bbb^\rightarrow \notin \nu / \lambda$ and $\bbb^\downarrow \notin \nu / \lambda$. If $T \in \bulltab(\nu / \lambda)$ has all $\bullet$'s in outer corners, then we define $\eout^\bullet(T)$ to be the filling obtained by deleting every $\bullet$ from $T$; otherwise $\eout^\bullet(T)$ is undefined. Note that if $\eout^\bullet(T)$ is defined, then it has shape $\delta / \lambda$ for some $\delta \subseteq \nu$.

Let $T \in \inc^q(\nu / \lambda)$ and let $I$ be any nonempty set of inner corners of $\nu / \lambda$. Then the {\bf $K$-jeu de taquin slide} of $T$ at $I$ is the result of the following composition of operations
\[
\slide_I(T) := \eout^\bullet \circ \swap_q \circ \dots \circ \swap_2 \circ \swap_1 \circ \ein_I(T).
\]
Observe that $\slide_I(T) \in \inc^q(\delta/ \rho)$ for some $\rho \subset \lambda$ and $\delta \subset \nu$.

Iterating this process for successive nonempty sets of inner corners $I_1, I_2, \ldots$, one eventually obtains an increasing tableau $R \in \inc^q(\kappa)$ of some straight shape $\kappa$. Such a tableau is called a {\bf rectification} of $T$.

\begin{remark}
Unlike in the classical standard tableau setting, an increasing tableau $T \in \inc^q(\nu / \lambda)$ may have more than one rectification and these rectifications may moreover have different straight shapes.
\end{remark}

\subsection{$K$-promotion}

For $T \in \bulltab(\nu/\lambda)$, we define an operation $\rep_{1 \rightarrow \bullet}$ that replaces each instance of $1$ by $\bullet$, as well as, for each $n \in \mathbb{Z}_{> 0}$, an operation $\rep_{\bullet \rightarrow n}$ that replaces each instance of $\bullet$ by $n$.
Let $\decr$ be the operator that decrements each numerical entry by $1$ (and ignores $\bullet$'s).

{\bf $K$-promotion} on $\inc^q(\lambda) \subset \bulltab(\lambda)$ is the composition \[\pro := \decr \circ \rep_{\bullet\rightarrow q+1} \circ \swap_{q} \circ \cdots \circ \swap_3 \circ \swap_2  \circ \rep_{1 \rightarrow \bullet}.\] It is not hard to see that if $T \in \inc^q(\lambda)$, then $\pro(T) \in \inc^q(\lambda)$, and that moreover this operation coincides with M.-P.~Sch\"utzenberger's definition of promotion \cite{Schutzenberger} in the case that $T$ is a standard Young tableau. For more details, see \cite{Pechenik}.

\begin{example}
Let $T  = \ytableaushort{124,346} \in \inc^6(2 \times 3)$. Then one computes $\pro(T)$ as follows:
\[
\begin{tikzpicture}
\node (A) {\ytableaushort{124,346}};
\node[left = 0 of A] (T) {$T=$};
\node[right=1.5 of A] (B) {\ytableaushort{\bullet 24, 346}};
\node[right=1.5 of B] (C) {\ytableaushort{2 \bullet 4, 346}};
\node[right=1.5 of C] (D) {\ytableaushort{2 \bullet 4, 346}};
\node[below=1.5 of A] (E) {\ytableaushort{24 \bullet, 3 \bullet 6}};
\node[right=1.5 of E] (F) {\ytableaushort{24 \bullet, 3 \bullet 6}};
\node[right=1.5 of F] (G) {\ytableaushort{24 6, 3 6 \bullet}};
\node[right=1.5 of G] (H) {\ytableaushort{24 6, 3 6 7}};
\node[below=1.5 of E] (I) {\ytableaushort{135, 256}};
\node[right = 0 of I] (J) {$=\pro(T).$};
\path (A) edge[pil] node[above]{$\rep_{1\rightarrow \bullet}$} (B);
\path (B) edge[pil] node[above]{$\swap_2$} (C);
\path (C) edge[pil] node[above]{$\swap_3$} (D);
\draw[->, thick] (D.south) .. node[above]{$\swap_4$} controls ([yshift=-3cm] D) and ([yshift=3cm] E) .. (E.north);
\path (E) edge[pil] node[above]{$\swap_5$} (F);
\path (F) edge[pil] node[above]{$\swap_6$} (G);
\path (G) edge[pil] node[above]{$\rep_{\bullet \rightarrow 7}$} (H);
\draw[->, thick] (H.south) .. node[above]{$\decr$} controls ([yshift=-3cm] H) and ([yshift=3cm] I) .. (I.north);
\end{tikzpicture}
\]
\end{example}

\subsection{$K$-evacuation and its dual}
To prove Theorem~\ref{thm:framesame} on $K$-promotion, we will need the related notion of (dual) $K$-evacuation. Define the {\bf shape} of an increasing tableau $T$ to be $\shape(T) = \lambda$ if $T \in \inc^q(\lambda)$ for some $q$. Write $T_{\leq a}$ for the subtableau of $T$ given by deleting all entries greater than $a$ and removing all empty boxes. In analogous fashion, define $T_{<a}, T_{\geq a},$ and $T_{>a}$, where $T_{\geq a}$ and $T_{>a}$ will generally be of skew shape. Note that $T \in \inc^q(\lambda)$ is uniquely determined by the vector of partitions
\[
\Big(\shape(T_{\leq 0}), \shape(T_{\leq 1}), \dots, \shape(T_{\leq q}) \Big).
\]

For $T \in \inc^q(\lambda)$, we define the {\bf $K$-evacuation} of $T$ to be the tableau $\evac(T)$ encoded by the vector
\[
\Big(\shape(\pro^q(T)_{\leq 0}), \shape(\pro^{q-1}(T)_{\leq 1}), \dots, \shape(\pro^0(T)_{\leq q}) \Big).
\]
Similarly, the {\bf dual $K$-evacuation} of $T$ is $\evac^*(T)$ encoded by the vector
\[
\Big(\shape(\pro^0(T)_{\leq 0}), \shape(\pro^{-1}(T)_{\leq 1}), \dots, \shape(\pro^{-q}(T)_{\leq q}) \Big).
\]

It is useful to encode all these data in a {\bf $K$-theoretic growth diagram} as in \cite{Thomas.Yong:K}, using ideas that originate in work of S.~Fomin (cf.\ \cite[Appendix~1]{EC2}); the $K$-theoretic growth diagram for $T \in \inc^q(\lambda)$ is a semi-infinite $2$-dimensional array formed by placing the partition $\shape(\pro^j(T)_{\leq i})$ in position $(i +j,j)$, where $0 \leq i \leq q$ and $j \in \mathbb{Z}$.

\begin{example}\label{ex:growth_diagram}
Let \[
T = \ytableaushort{1245,3458,4679,68{10}{11}}.
\]
Then the $K$-theoretic growth diagram of $T$ is 
\[ \ddots \phantom{35246427572435135234544523456222} \]
\[\ytableausetup{boxsize=4mm}
\scalebox{0.27}{$\begin{array}{llllllllllllllllllllllll}
\emptyset & \ydiagram{1} & \ydiagram{2} & \ydiagram{2,1} & \ydiagram{3,2,1} & \ydiagram{4,3,1} & \ydiagram{4,3,2,1} & \ydiagram{4,3,3,1} & \ydiagram{4,4,3,2} & \ydiagram{4,4,4,2} & \ydiagram{4,4,4,3} & \ydiagram{4,4,4,4,0} & & & & &  &  & & &  &  & & \\ 
 & \emptyset & \ydiagram{1} & \ydiagram{1,1} & \ydiagram{2,1,1} & \ydiagram{3,2,1} & \ydiagram{3,2,2,1} &\ydiagram{3,3,2,1} & \ydiagram{4,3,2,2} & \ydiagram{4,4,3,2} & \ydiagram{4,4,3,3} & \ydiagram{4,4,4,3} & \ydiagram{4,4,4,4,0} & & & &  &  & &&  &  & &\\ 
 & & \emptyset & \ydiagram{1} & \ydiagram{2,1} & \ydiagram{3,2} & \ydiagram{3,2,1} & \ydiagram{3,3,1} & \ydiagram{4,3,2,1} & \ydiagram{4,4,3,1} & \ydiagram{4,4,3,2} & \ydiagram{4,4,4,2} & \ydiagram{4,4,4,3} & \ydiagram{4,4,4,4,0} & & &  &  & &&  &  & &\\ 
& & & \emptyset & \ydiagram{1} & \ydiagram{2,1} & \ydiagram{2,1,1} & \ydiagram{3,2,1} & \ydiagram{4,2,2,1} & \ydiagram{4,3,2,1} & \ydiagram{4,3,2,2} & \ydiagram{4,4,3,2} & \ydiagram{4,4,3,3} & \ydiagram{4,4,4,3} & \ydiagram{4,4,4,4,0} & &  &  & & &  &  & &\\ 
& & &  & \emptyset  & \ydiagram{1}  & \ydiagram{1,1}  & \ydiagram{2,1}  & \ydiagram{3,2,1}  & \ydiagram{3,3,1}  & \ydiagram{3,3,2,1}  & \ydiagram{4,3,3,1}  & \ydiagram{4,3,3,2}  & \ydiagram{4,4,3,2}  & \ydiagram{4,4,4,3} & \ydiagram{4,4,4,4,0} &  &  & & &  &  & & \\ 
& & &  &  & \emptyset  & \ydiagram{1}  & \ydiagram{2}  & \ydiagram{3,1}  & \ydiagram{3,2}  & \ydiagram{3,2,1}  & \ydiagram{4,3,2}  & \ydiagram{4,3,2,1}  & \ydiagram{4,4,2,1}  & \ydiagram{4,4,3,2} & \ydiagram{4,4,4,3} &\ydiagram{4,4,4,4,0} &  & & &  &  & &\\ 
& & &  &  &  & \emptyset  & \ydiagram{1}  & \ydiagram{2,1}  & \ydiagram{2,2}  & \ydiagram{2,2,1}  & \ydiagram{3,2,2}  & \ydiagram{3,2,2,1}  & \ydiagram{4,3,2,1}  &\ydiagram{4,3,3,2} & \ydiagram{4,4,3,3} & \ydiagram{4,4,4,3}  & \ydiagram{4,4,4,4,0}  & & &  &  & &\\ 
& & &  &  &  &  & \emptyset  & \ydiagram{1}  & \ydiagram{2,1} & \ydiagram{2,1,1}  & \ydiagram{3,2,1}  & \ydiagram{3,2,1,1}  & \ydiagram{4,3,1,1}  & \ydiagram{4,3,2,1} & \ydiagram{4,4,3,2} & \ydiagram{4,4,4,2}  & \ydiagram{4,4,4,3}  & \ydiagram{4,4,4,4,0} & &  &  & &\\ 
& & &  &  &  &  &  & \emptyset & \ydiagram{1} & \ydiagram{1,1}  &\ydiagram{2,1}  & \ydiagram{2,1,1}  & \ydiagram{3,2,1}  & \ydiagram{3,2,2} & \ydiagram{4,3,2,1} & \ydiagram{4,4,3,1}  & \ydiagram{4,4,3,2}  & \ydiagram{4,4,4,3} & \ydiagram{4,4,4,4,0} &  &  & &\\ 
& & &  &  &  &  &  &  & \emptyset  & \ydiagram{1} &  \ydiagram{2} &  \ydiagram{2,1} & \ydiagram{3,2}  & \ydiagram{3,2,1} & \ydiagram{4,3,1,1} & \ydiagram{4,4,2,1}  & \ydiagram{4,4,2,2}  & \ydiagram{4,4,3,2} & \ydiagram{4,4,4,3} & \ydiagram{4,4,4,4,0}  &  & &\\ 
& & &  &  &  &  &  &  &  & \emptyset  & \ydiagram{1}  & \ydiagram{1,1}  & \ydiagram{2,1}  & \ydiagram{2,1,1} & \ydiagram{3,2,1,1} & \ydiagram{4,3,2,1}  & \ydiagram{4,3,2,2}  & \ydiagram{4,3,3,2} & \ydiagram{4,4,3,3} &  \ydiagram{4,4,4,3} & \ydiagram{4,4,4,4,0}  & &\\ 
& & &  &  &  &  &  &  &  &  & \emptyset  & \ydiagram{1}  & \ydiagram{2}  & \ydiagram{2,1} & \ydiagram{3,2,1} & \ydiagram{4,3,2}  & \ydiagram{4,3,2,1}  & \ydiagram{4,3,3,1} & \ydiagram{4,4,3,2} & \ydiagram{4,4,4,2}  & \ydiagram{4,4,4,3}  & \ydiagram{4,4,4,4,0} &\\ 
\end{array}$}
\] \ytableausetup{boxsize=normal}
\[ \phantom{35246427572435135234544523456222} \ddots \]

Here the top illustrated row encodes $T$, the bottom row encodes $\pro^{11}(T)$, and the central column encodes $\evac(T)$ (which is also $\evac^*(\pro^{11}(T))$).
\end{example}

Using the $K$-theoretic growth diagram, it is not hard to uncover various relations between the operators under consideration. Together \cite[Theorem~4.1]{Thomas.Yong:K} and \cite[Lemma~3.1]{Pechenik} give the following facts that we will need:
\begin{lemma}\label{lemma:GD_facts}
The following relations hold for operations on $\inc^q(\lambda)$:
\begin{itemize}
\item[(a)] $\evac^2 = (\evac^*)^2 = \id$;
\item[(b)] $\evac^* \circ \evac = \pro^q$;
\item[(c)] $\pro \circ \evac = \evac \circ \pro^{-1}$;
\item[(d)] if $\lambda = m \times n$, $\evac^* = \rot \circ \evac \circ \rot$, where $\rot(T)$ is given by rotating $T$ by $180^\degree$ and replacing $i$ by $q+1-i$.
\end{itemize}
\end{lemma}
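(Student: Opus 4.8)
The plan is to work entirely inside the $K$-theoretic growth diagram and to realize each of the four operators as a reading along a distinguished line, so that (a)--(d) become statements about symmetries of that diagram. Write $G_T(x,y) := \shape(\pro^y(T)_{\leq x-y})$ for the partition in position $(x,y)$; since $\pro$ is invertible this is defined for all $y \in \mathbb{Z}$, and directly from the definition one has the shift-covariance $G_{\pro^k T}(x,y) = G_T(x+k, y+k)$. Unwinding the encodings of Section~\ref{sec:frames}, the tableau $\pro^j(T)$ is read off the row $y = j$ (its $i$-th shape being $G_T(j+i, j)$), while $\evac(T)$ and $\evac^*(T)$ are read off the columns $x = q$ and $x = 0$ respectively, via $\shape(\evac(T)_{\leq i}) = G_T(q, q-i)$ and $\shape(\evac^*(T)_{\leq i}) = G_T(0, -i)$. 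The whole lemma will follow once I establish two symmetries of the local growth rule; everything else is bookkeeping with these reading lines.

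The first symmetry is the anti-diagonal one: I claim $G_{\evac(T)}(x,y) = G_T(q-y, q-x)$. To prove it, set $H(x,y) := G_T(q-y,q-x)$; its row $y=0$ is $H(x,0) = G_T(q, q-x)$, which is precisely the chain encoding $\evac(T)$, so $H$ and $G_{\evac(T)}$ agree along $y=0$. Since a growth diagram is determined by any one row through the (reversible) local rule, it suffices to check that $H$ again satisfies the local rule; as $H$ is obtained from $G_T$ by the reflection $\sigma\colon (x,y)\mapsto(q-y,q-x)$, this is exactly invariance of the $K$-theoretic local rule under reflection across a square's anti-diagonal. Granting this, part (a) is immediate: applying the identity twice gives $G_{\evac^2(T)}(x,y) = G_T(x,y)$, hence $\evac^2 = \id$. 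For (b) I compute $\shape(\evac^*\evac(T)_{\leq i}) = G_{\evac(T)}(0,-i) = G_T(q+i, q) = \shape(\pro^q(T)_{\leq i})$, so $\evac^*\circ\evac = \pro^q$. For (c) both sides unwind to the same chain: $\shape(\pro\,\evac(T)_{\leq i}) = G_{\evac(T)}(1+i,1) = G_T(q-1, q-1-i)$, whereas shift-covariance gives $\shape(\evac\,\pro^{-1}(T)_{\leq i}) = G_{\pro^{-1}T}(q,q-i) = G_T(q-1,q-1-i)$. Finally $(\evac^*)^2 = \id$ is then formal: (a) and (b) give $\evac^* = \pro^q\circ\evac$, while (c) gives $\evac\circ\pro^q = \pro^{-q}\circ\evac$, whence $(\evac^*)^2 = \pro^q\evac\pro^q\evac = \pro^q\pro^{-q}\evac^2 = \id$.

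Part (d) uses a second symmetry available only for the rectangle, where complementation makes sense. Let $c(\mu)$ denote the partition obtained by rotating the complementary skew shape $(m\times n)/\mu$ by $180^\circ$ inside the rectangle; then $c^2 = \id$ and $\shape(\rot(V)_{\leq i}) = c(\shape(V_{\leq q-i}))$ for every $V \in \inc^q(m\times n)$. I claim the rectangular analogue of the first symmetry, namely $G_{\rot(T)}(x,y) = c\big(G_T(q-x,-y)\big)$; that is, the local rule is invariant under the $180^\circ$ rotation $(x,y)\mapsto(q-x,-y)$ combined with the complementation $c$ of every entry. As before this is checked by matching the row $y=0$ and appealing to reversibility. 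Granting it, writing $U = \rot(T)$ I get $\shape(\evac(U)_{\leq i}) = G_U(q,q-i) = c(G_T(0,i-q)) = c(\shape(\evac^*(T)_{\leq q-i}))$, and applying $\rot$ once more, $\shape(\rot\,\evac\,\rot(T)_{\leq i}) = c(\shape(\evac(U)_{\leq q-i})) = c(c(\shape(\evac^*(T)_{\leq i}))) = \shape(\evac^*(T)_{\leq i})$ since $c^2=\id$. Hence $\evac^* = \rot\circ\evac\circ\rot$.

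The main obstacle is the two structural claims about the local rule: its invariance under anti-diagonal reflection and, for rectangles, under rotation-plus-complementation. These are precisely the symmetries packaged in \cite[Theorem~4.1]{Thomas.Yong:K} and \cite[Lemma~3.1]{Pechenik}, and in the $K$-theoretic setting they are genuinely more delicate than their classical (Fomin) counterparts: consecutive shapes can differ by a rook strip of several boxes, and a $\swap_i$ may merge or split edge-connected components, so one must first verify that the forward and backward local rules are mutually inverse bijections before the uniqueness and reversibility arguments above are licensed. Once those symmetries are in hand, all four identities reduce to the short chain computations indicated, with no further case analysis required.
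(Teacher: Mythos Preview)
Your proof is correct and follows the same growth-diagram approach that underlies the cited results \cite[Theorem~4.1]{Thomas.Yong:K} and \cite[Lemma~3.1]{Pechenik}; the paper itself gives no argument beyond invoking those references. You have usefully made explicit the coordinate bookkeeping that translates the two structural symmetries of the local rule (anti-diagonal reflection, and rotation-plus-complementation in the rectangular case) into the four operator identities, and you correctly flag that those symmetries are the genuine content borrowed from the literature.
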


\subsection{Proof of Theorem~\ref{thm:framesame}}
Let $w$ be the {\bf reading word} of $T$, given by reading the entries of $T$ by rows from left to right and from bottom to top, i.e. in ``reverse Arabic fashion.'' Let $\rot(w) := w_0 \cdot w \cdot w_0$, where $w_0$ is the longest element of the symmetric group $S_q$. Since $\rot(w)$ is obtained from $w$ by reversing the order of the letters of $w$ and then replacing $i$ by $q+1-i$, we see that $\rot(w)$ is the reading word of $\rot(T)$.

Define $w_{\leq a}$ to be the subword of $w$ obtained by deleting all letters greater than $a$, with analogous definitions of $w_{< a}$,  $w_{\geq a}$, and  $w_{> a}$.

\begin{lemma}\label{lem:first_rows}
The tableaux $\rot(T)$ and $\evac(T)$ have the same first row. 
\end{lemma}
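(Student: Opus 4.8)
The plan is to compute the first rows of both $\rot(T)$ and $\evac(T)$ independently in terms of data extracted from the reading word $w$ of $T$, and check they agree.

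\textbf{First row of $\evac(T)$.} By definition, $\evac(T)$ is encoded by the vector $\bigl(\shape(\pro^q(T)_{\leq 0}), \shape(\pro^{q-1}(T)_{\leq 1}), \dots, \shape(\pro^0(T)_{\leq q})\bigr)$, so its first row is determined by how many of these successive shapes pick up a box in the first row, i.e. by the sequence $\shape(\pro^{q-i}(T)_{\leq i})$ as $i$ runs from $0$ to $q$, restricted to the first row. Passing to the $K$-theoretic growth diagram, the first row of $\evac(T)$ is read off the central column, and the entries on the first row of the central column record, for each $i$, the first-row length of $\shape(\pro^{q-i}(T)_{\leq i})$. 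Since $\pro$ acts on the first row of an increasing tableau in a transparent way (it essentially performs a cyclic shift/jeu-de-taquin slide on the first row), I expect the first-row length of $\shape(\pro^{k}(T)_{\leq i})$ to be expressible directly in terms of the subword $w_{\leq i}$ of the reading word $w$ — more precisely, in terms of a ``longest increasing subsequence''-type statistic of $w_{\leq i}$ (this is the $\LIS$ macro the paper has set up, and it is the kind of thing one controls via Greene's theorem for $K$-jeu de taquin rectification, using that rectification of an increasing tableau computes such statistics of its reading word).

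\textbf{First row of $\rot(T)$.} Since $\rot(T)$ has reading word $\rot(w) = w_0 \cdot w \cdot w_0$, which reverses the letters of $w$ and sends $i \mapsto q+1-i$, the first row of $\rot(T)$ is literally the last row of $T$ with entries replaced by $q+1-i$, reversed. But I do not think the proof wants the tableau-level description; rather, I would again compute the first-row length of successive shapes $\shape(\rot(T)_{\leq i})$. Because $\rot$ sends entries $\leq i$ in $\rot(T)$ to entries $> q - i$ in $T$, we have $\rot(T)_{\leq i}$ corresponds to $T_{> q-i}$ (rotated, recolored), so $\shape(\rot(T)_{\leq i})$ is the complementary/rotated shape of $\shape(T_{>q-i})$ inside $m \times n$. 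The first-row length of this is then $n$ minus the first-column (or appropriately, first-row-complement) length of $\shape(T_{\leq q-i})$ — again something governed by an $\LIS$-type statistic of the subword $w_{\leq q-i}$, or equivalently of $w_{>q-i}$.

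\textbf{Matching them up and the main obstacle.} Both first rows are now encoded as monotone lattice paths whose increments are determined by increasing-subsequence statistics of initial/final segments of $w$, and the task reduces to an identity between these statistics — essentially that the relevant $\LIS$ statistic of $w_{\leq i}$ matches the complementary statistic of $w_{> q-i}$, which is exactly the kind of duality that underlies the symmetry $\evac^* = \rot \circ \evac \circ \rot$ from Lemma~\ref{lemma:GD_facts}(d). I expect the cleanest route is actually to \emph{use} Lemma~\ref{lemma:GD_facts}(d): it gives $\rot \circ \evac \circ \rot = \evac^*$, hence $\rot(T) = \evac \circ \rot \circ \evac^*(T)$ — no, more directly, combining (a), (b), (d) one gets $\rot(\evac^*(T)) = \evac(\rot(T))$ and similar, so $\evac(T)$ and $\rot(T)$ should be comparable after a controlled number of $\pro$ steps; one then only needs that $\pro$ and $\evac$ fix the first row of the relevant tableau, which is the $2$-row-type statement already available. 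The main obstacle is making precise the claim ``$\pro$ acts predictably on the first row so that first-row lengths of $\shape(\pro^k(T)_{\leq i})$ depend only on $w_{\leq i}$'': one must verify that the $\bullet$-slides in the first row during $K$-promotion do not interact with deeper rows in a way that changes the first-row profile, and that the $\LIS$-type reading-word formula for first-row length of a rectification is valid for $K$-jeu de taquin (not just classical jeu de taquin). I would isolate that as a sub-lemma about reading words and first rows, prove it by tracking the $\swap_i$ operators restricted to the first row, and then the matching of the two lattice-path encodings becomes a short symmetric computation.
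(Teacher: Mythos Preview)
Your overall strategy --- reduce both first rows to $\LIS$-type statistics of subwords of the reading word and invoke a Greene-type theorem for $K$-rectification --- is exactly the paper's. But you are missing the one observation that makes the argument a three-line proof, and without it your proposal does not close.

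The gap is on the $\evac(T)$ side. You write that the first-row length of $\shape(\pro^{q-i}(T)_{\leq i})$ should be governed by $w_{\leq i}$, and you propose to justify this by tracking how $\pro$ acts on first rows. That is the wrong subword, and the proposed justification is both unnecessary and not obviously true (the first row of $\pro(T)$ is not a function of the first row of $T$ alone). What you want instead is the following, which falls straight out of the definition of $\pro$: one step of $K$-promotion deletes the $1$'s, performs a $K$-jeu de taquin slide, and decrements, so $\pro^{q-a}(T)_{\leq a}$ is (up to the global shift of labels) a $K$-rectification of $T_{>q-a}$. Hence $\shape(\evac(T)_{\leq a})$ is the shape of a $K$-rectification of $T_{>q-a}$, and the Thomas--Yong $\LIS$ theorem gives its first-row length as $\LIS(w_{>q-a})$. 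On the $\rot(T)$ side you already have the right picture: $\rot(T)_{\leq a}$ has reading word $\rot(w)_{\leq a}$, so its first-row length is $\LIS(\rot(w)_{\leq a}) = \LIS(w_{>q-a})$, and the two match for every $a$.

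In particular, there is no need to invoke Lemma~\ref{lemma:GD_facts}(d) or to prove any sub-lemma about $\swap_i$ restricted to the first row; the whole ``main obstacle'' paragraph evaporates once you recognize $\evac(T)_{\leq a}$ as a rectification of $T_{>q-a}$.
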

\begin{proof}
 The reading word of  $\rot(T)_{\leq a}$ is $\rot(w)_{\leq a}$. Hence by \cite[Theorem~6.1]{Thomas.Yong:K}, the length of the first row of $\rot(T)_{\leq a}$ is $\LIS(\rot(w)_{\leq a})$, where $\LIS(u)$ denotes the length of the longest strictly increasing subsequence of the word $u$.  By the definition of $\rot$, we have $\LIS(\rot(w)_{\leq a}) = \LIS(w_{> n-a})$. But by \cite[Theorem~6.1]{Thomas.Yong:K}, $\LIS(w_{> n-a})$ is the length of the first row of any $K$-rectification of $T_{> n-a}$.
 By definition, the shape of $\evac(T)_{\leq a}$ is the shape of a particular $K$-rectification of $T_{> n-a}$. Thus the length of the first row of $\evac(T)_{\leq a}$ is also the length of the first row of $\rot(T)_{\leq a}$. The lemma follows.
\end{proof}

\begin{lemma}\label{lem:first_columns}
The tableaux $\rot(T)$ and $\evac(T)$ have the same first column. 
\end{lemma}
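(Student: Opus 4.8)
The plan is to deduce Lemma~\ref{lem:first_columns} from Lemma~\ref{lem:first_rows} by transposing. For a (straight- or skew-shape) tableau $U$, write $U^t$ for its transpose, the reflection across the main diagonal; if $U$ is increasing with entries bounded by $q$, then so is $U^t$, and the rectangle $m\times n$ becomes $n\times m$. Since the first column of a tableau is exactly the first row of its transpose, Lemma~\ref{lem:first_columns} is equivalent to the assertion that $\rot(T)^t$ and $\evac(T)^t$ have the same first row. It therefore suffices to establish the two equivariances
\[
\rot(T)^t = \rot(T^t) \qquad\text{and}\qquad \evac(T)^t = \evac(T^t),
\]
after which that assertion is precisely Lemma~\ref{lem:first_rows} applied to the rectangular increasing tableau $T^t \in \inc^q(n\times m)$.

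The first equivariance is immediate: rotating by $180^\degree$ and relabeling $i\mapsto q+1-i$ each commute with $(-)^t$. For the second, the key point is that $K$-promotion itself commutes with transposition. Inspecting the definition of $\pro$, the operators $\decr$, $\rep_{1\rightarrow\bullet}$ and $\rep_{\bullet\rightarrow q+1}$ act only on entry values and so are visibly transpose-equivariant, while each $\swap_i$ is transpose-equivariant because edge-connectivity of boxes---and hence the decomposition of the boxes containing $i$ or $\bullet$ into trivial and nontrivial components---is preserved under reflection across the main diagonal. Thus $\pro(T^t) = \pro(T)^t$, and so $\pro^j(T^t) = (\pro^j(T))^t$ for every $j \geq 0$. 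Since restriction to entries $\leq i$ commutes with $(-)^t$, and a tableau and its transpose have conjugate shapes, the vector
\[
\Big(\shape(\pro^q(T^t)_{\leq 0}),\ \shape(\pro^{q-1}(T^t)_{\leq 1}),\ \dots,\ \shape(\pro^0(T^t)_{\leq q})\Big)
\]
encoding $\evac(T^t)$ is the componentwise conjugate of the analogous vector encoding $\evac(T)$; hence $\evac(T^t) = \evac(T)^t$.

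Assembling the pieces: the first row of $\rot(T)^t$ equals the first row of $\rot(T^t)$, which by Lemma~\ref{lem:first_rows} equals the first row of $\evac(T^t)$, which equals the first row of $\evac(T)^t$; transposing back, $\rot(T)$ and $\evac(T)$ agree in their first columns.

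The one point genuinely requiring verification is the transpose-equivariance of the slide operators $\swap_i$, and hence of $K$-promotion and $K$-evacuation; I expect no real obstacle beyond this bookkeeping. In particular, the transpose reduction neatly sidesteps any need for a first-column analogue of \cite[Theorem~6.1]{Thomas.Yong:K}, which would be the crux of the more laborious alternative of imitating the proof of Lemma~\ref{lem:first_rows} with longest strictly increasing subsequences replaced throughout by longest strictly decreasing ones.
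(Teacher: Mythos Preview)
Your argument is correct and genuinely different from the paper's. The paper proves Lemma~\ref{lem:first_columns} by rerunning the proof of Lemma~\ref{lem:first_rows} verbatim, but with the first-row/longest-increasing-subsequence result \cite[Theorem~6.1]{Thomas.Yong:K} replaced by its first-column/longest-strictly-decreasing-subsequence analogue (citing \cite{Thomas.Yong:Plancherel} or \cite[Corollary~6.8]{Buch.Samuel}). You instead reduce to Lemma~\ref{lem:first_rows} itself via the transpose equivariances $\rot(T)^t=\rot(T^t)$ and $\evac(T)^t=\evac(T^t)$, the latter following from the evident transpose-equivariance of each $\swap_i$ and hence of $\pro$.

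Both routes are short. The paper's approach has the virtue of making explicit the column analogue of the Greene-type theorem underlying Lemma~\ref{lem:first_rows}, at the cost of importing an additional external result. Your approach is more self-contained: once one has Lemma~\ref{lem:first_rows}, no further citations are needed, and the transpose equivariance of $K$-promotion and $K$-evacuation is a clean structural fact worth recording in its own right. Your closing paragraph even anticipates the paper's strategy as the ``more laborious alternative.'' One stylistic note: the hedge ``I expect no real obstacle'' undersells what you have actually shown; the verification that $\swap_i$ commutes with transposition is immediate from the definition (edge-adjacency is symmetric in rows and columns), so you may as well state it as fact.
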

\begin{proof}
The proof is the same as for Lemma~\ref{lem:first_rows}, except that one should replace use of \cite[Theorem~6.1]{Thomas.Yong:K} on the relation between first rows and longest increasing subsequences with use of the analogous relation between first \emph{columns} and longest \emph{decreasing} subsequences (see \cite{Thomas.Yong:Plancherel} or \cite[Corollary~6.8]{Buch.Samuel}).
\end{proof}

The following proposition is of independent interest. It extends \cite[Proposition~3.3]{Pechenik}, which is the special case where $T \in \inc^q(2 \times n)$.

\begin{proposition}\label{prop:frame}
The tableaux $\rot(T)$ and $\evac(T)$ have the same frame. 
\end{proposition}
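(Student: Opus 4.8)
The plan is to bootstrap from Lemmas~\ref{lem:first_rows} and~\ref{lem:first_columns} using only the involutivity of $K$-evacuation. Those two lemmas already give that $\rot(T)$ and $\evac(T)$ agree on their first row and their first column, so it remains only to treat the last row and the last column. The key observation is that $\rot$ is an involution on $\inc^q(m\times n)$ which carries the first row (resp.\ first column) of a tableau, read left to right (resp.\ top to bottom), to the last row (resp.\ last column) of its image, read in reverse order and with each entry $i$ replaced by $q+1-i$. Consequently, two tableaux in $\inc^q(m\times n)$ share a first row (resp.\ first column) if and only if their images under $\rot$ share a last row (resp.\ last column).

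With this in hand, I would apply Lemmas~\ref{lem:first_rows} and~\ref{lem:first_columns} not to $T$ but to the tableau $\evac(T)$, which again lies in $\inc^q(m\times n)$ since $\shape(\evac(T)) = \shape(\pro^0(T)_{\leq q}) = \shape(T) = m\times n$. This yields that $\rot(\evac(T))$ and $\evac(\evac(T))$ have the same first row and the same first column; and $\evac(\evac(T)) = T$ by Lemma~\ref{lemma:GD_facts}(a). Now applying $\rot$ to the pair of tableaux $\rot(\evac(T))$ and $T$, and using $\rot^2 = \id$, the observation above converts ``same first row'' and ``same first column'' into ``same last row'' and ``same last column'' for the pair $\evac(T)$ and $\rot(T)$.

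Putting the two halves together, $\rot(T)$ and $\evac(T)$ agree on the first row, the last row, the first column, and the last column, which is exactly $\Frame(m\times n)$; this proves the proposition. I do not expect any genuine obstacle here: the only points needing (minor) care are verifying that $\evac$ preserves the rectangular shape, so that the earlier lemmas may legitimately be reapplied to $\evac(T)$, and correctly tracking the reversal-and-complementation bookkeeping of $\rot$ along the boundary rows and columns. It is worth noting that part~(d) of Lemma~\ref{lemma:GD_facts} plays no role in this argument---only the involutivity from part~(a) is used.
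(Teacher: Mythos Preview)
Your argument is correct and is essentially identical to the paper's own proof: the paper sets $T'=\evac(T)$, applies Lemmas~\ref{lem:first_rows} and~\ref{lem:first_columns} to $T'$, uses $\evac(T')=T$ from Lemma~\ref{lemma:GD_facts}(a), and then applies $\rot$ to convert the first row/column agreement into last row/column agreement via $\rot(\rot(T'))=\evac(T)$. Your observation that only part~(a) of Lemma~\ref{lemma:GD_facts} is needed here is accurate and matches the paper's proof as well.
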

\begin{proof}
By Lemmas~\ref{lem:first_rows} and~\ref{lem:first_columns}, it remains to show that $\rot(T)$ and $\evac(T)$ have the same last row and column.

Let $T' = \evac(T)$. Then by Lemmas~\ref{lem:first_rows} and~\ref{lem:first_columns}, $\rot(T')$ and $\evac(T')$ have the same first row and column. But by Lemma~\ref{lemma:GD_facts}(a), $\evac(T') = T$, so $\rot(T')$ and $T$ have the same first row and column. Hence $\rot(\rot(T'))$ and and $\rot(T)$ have the same last row and column. Since $\rot(\rot(T')) = \evac(T)$, we are done.
\end{proof}

Clearly $\rot(T)$ and $\rot(\rot(\rot(T)))$ have the same frame. Since by Lemma~\ref{lemma:GD_facts}(d), we have $\evac^* = \rot \circ \evac \circ \rot$, it thereby follows from Proposition~\ref{prop:frame} that $\rot(T)$ and $\evac^*(T)$ also have the same frame. Thus $\evac^* ( \evac(T))$ has the same frame as $\rot(\rot(T)) = T$. But by Lemma~\ref{lemma:GD_facts}(b), $\pro^q(T) = \evac^* ( \evac(T))$, so \[\Frame(T) = \Frame(\pro^q(T)).\] This concludes the proof of Theorem~\ref{thm:framesame}. \qed

\section{Homomesy}\label{sec:homomesy}
In this section, we prove a family of new homomesy results for increasing tableaux. We will obtain these by imitating the proof of \cite[Theorem~1.1]{BPS} and using Theorem~\ref{thm:framesame}.

For $T \in \inc^q(m \times n)$ and $\bbb$ a box in $\Frame(m \times n)$, let $\dist(T, \bbb)$ be the \emph{multiset} \[
\dist(T, \bbb)  := \{  \wt_{\{ \bbb \}}(\pro^k(T)) : 0 \leq k < q   \}.
\]

\begin{proposition}\label{prop:dist}
For $T \in \inc^q(m \times n)$ and $\bbb$ a box in $\Frame(m \times n)$, \[\dist(T, \bbb) = \dist(\evac(T), \bbb).\]
\end{proposition}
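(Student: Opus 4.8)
The goal is to show that the multiset of values appearing in a fixed frame box $\bbb$ over a full $K$-promotion orbit is unchanged when we replace $T$ by $\evac(T)$. The natural approach is to relate the sequence $\pro^k(T)$ to a sequence involving $\evac$ via Lemma~\ref{lemma:GD_facts}. First I would use Lemma~\ref{lemma:GD_facts}(c), $\pro \circ \evac = \evac \circ \pro^{-1}$, which iterates to $\pro^k \circ \evac = \evac \circ \pro^{-k}$ for all $k \in \mathbb{Z}$. Hence $\wt_{\{\bbb\}}(\pro^k(\evac(T))) = \wt_{\{\bbb\}}(\evac(\pro^{-k}(T)))$. So it suffices to show that for each $k$, the frame box $\bbb$ of $\evac(\pro^{-k}(T))$ has the same entry as the frame box $\bbb$ of $\pro^{j}(T)$ for some $j$ depending on $k$, in a way that gives a bijection of the index sets $\{0,1,\dots,q-1\}$ — at which point the two multisets coincide.

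\textbf{Key step via Proposition~\ref{prop:frame}.} The bridge is Proposition~\ref{prop:frame}: $\Frame(\evac(U)) = \Frame(\rot(U))$ for any $U \in \inc^q(m\times n)$. Applying this with $U = \pro^{-k}(T)$ gives $\wt_{\{\bbb\}}(\evac(\pro^{-k}(T))) = \wt_{\{\bbb\}}(\rot(\pro^{-k}(T)))$, since $\bbb \in \Frame(m\times n)$. Now $\rot$ acts on a frame box $\bbb$ by sending the entry $x$ in $\bbb$ to $q+1-x$ placed in the antipodal box $\bbb^{\rot}$; but a subtlety is that $\rot$ changes which box we are looking at, and also complements the entry. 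To handle the entry complementation I would pass to the multiset level: as $k$ ranges over a full period, $\rot(\pro^{-k}(T))$ ranges over the $\rot$-image of the orbit of $T$. Combining with Proposition~\ref{prop:dist}'s statement being about the box $\bbb$ itself (not $\bbb^{\rot}$), I would instead argue directly that $\dist(T,\bbb)$ is computed from $\evac(T)$ by first observing $\dist(\evac(T),\bbb) = \{\wt_{\{\bbb\}}(\evac(\pro^{-k}(T))) : 0 \le k < q\}$, then rewriting each term using Proposition~\ref{prop:frame} as $\wt_{\{\bbb\}}(\rot(\pro^{-k}(T)))$, and finally noting that as $-k$ runs over a complete set of residues, so does $k$, so this multiset equals $\{\wt_{\{\bbb\}}(\rot(\pro^{k}(T))) : 0 \le k < q\}$. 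The remaining task is to identify this with $\dist(T,\bbb)$.

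\textbf{The main obstacle.} The crux is precisely reconciling the $\rot$ on the right-hand side: $\wt_{\{\bbb\}}(\rot(U))$ is \emph{not} equal to any $\wt$ of $U$ at the same box — it is $q+1$ minus the entry of $U$ at the rotated box $\bbb^{\rot}$, which is again a frame box since the frame is $180^\circ$-symmetric. So I expect the real content is a pairing argument: one must show that the multiset $\{q+1 - \wt_{\{\bbb^{\rot}\}}(\pro^k(T)) : 0 \le k < q\}$ equals $\{\wt_{\{\bbb\}}(\pro^k(T)) : 0 \le k < q\} = \dist(T,\bbb)$. Equivalently, $\dist(T,\bbb) + \dist(T,\bbb^{\rot})$ should be ``complementary'' as multisets, i.e. $\dist(T,\bbb) = \{q+1-x : x \in \dist(T,\bbb^{\rot})\}$. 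This is a statement purely about the orbit of $T$ under $\pro$ and the $\rot$-symmetry of the frame; I would prove it by noting that $\rot$ conjugates $\pro$ to $\pro^{-1}$ in the sense that $\rot \circ \pro = \pro^{-1} \circ \rot$ (which follows by combining Lemma~\ref{lemma:GD_facts}(c),(b),(d), or can be cited from \cite{Pechenik}), so $\rot$ maps the $\pro$-orbit of $T$ bijectively to itself (reversing the cyclic order), and on a frame box it simultaneously swaps $\bbb \leftrightarrow \bbb^{\rot}$ and complements entries by $q+1$. Tracking these two involutions over the orbit yields the desired multiset identity, and hence $\dist(T,\bbb) = \dist(\evac(T),\bbb)$. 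I anticipate the bookkeeping of ``which box, which power, complemented or not'' to be the only delicate part; everything else is formal manipulation of the relations in Lemma~\ref{lemma:GD_facts}.
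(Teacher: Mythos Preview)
Your reduction is correct up through the identity
\[
\dist(\evac(T),\bbb)=\{q+1-x : x\in\dist(T,\bbb^{\rot})\},
\]
obtained from Lemma~\ref{lemma:GD_facts}(c), Proposition~\ref{prop:frame}, and the $q$-periodicity of frame entries (Theorem~\ref{thm:framesame}). The gap is in the final step. The relation $\rot\circ\pro=\pro^{-1}\circ\rot$ yields only
\[
\dist(\rot(T),\bbb)=\{q+1-x : x\in\dist(T,\bbb^{\rot})\},
\]
so all you have established is $\dist(\evac(T),\bbb)=\dist(\rot(T),\bbb)$. To finish you would need $\dist(\rot(T),\bbb)=\dist(T,\bbb)$, and your justification---that ``$\rot$ maps the $\pro$-orbit of $T$ bijectively to itself''---is false in general: $\rot(T)$ need not lie in the $\pro$-orbit of $T$ (for increasing tableaux $\rot$ and $\evac$ agree only on the frame, not globally, and even $\evac(T)$ need not be a $\pro$-power of $T$). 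Worse, the missing identity $\dist(T,\bbb)=\{q+1-x:x\in\dist(T,\bbb^{\rot})\}$ is exactly the $S=\{\bbb,\bbb^{\rot}\}$ case of Theorem~\ref{thm:homomesy}, which the paper deduces \emph{from} Proposition~\ref{prop:dist}; so your argument is circular within the paper's logical structure.

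The paper's proof supplies the missing content by a different mechanism: it reads both $\dist(T,\bbb)$ and $\dist(\evac(T),\bbb)$ off the same $K$-theoretic growth diagram (one from rows, one from columns), encodes the first appearance of $\bbb$ along each anti-diagonal as a lattice path of $\pm 1$ steps, and uses the elementary fact that in a lattice path returning to its starting height the multiset of heights at down-step endpoints equals the multiset of heights at up-step endpoints. That bijection between up- and down-steps is the substantive combinatorial input that your purely group-theoretic manipulations cannot replace.
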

\begin{proof}
This proof is perhaps best understood by following along with the succeeding Example~\ref{ex:shading}.

Consider the $K$-theoretic growth diagram $G$ for $T$. A fixed row $r$ of $G$ encodes an increasing tableau $R$. The row immediately below this encodes $\pro(R)$. The column that intersects $r$ at its rightmost partition encodes, by definition, $\evac(R)$. The column immediately left of this then encodes $\pro(\evac(R))$ by Lemma~\ref{lemma:GD_facts}(c). Say the {\bf rank} of a partition $\pi$ in $G$ is the number $\rank(\pi)$ of partitions that are strictly left of $\pi$ and in its row. Note that the rank is also the number of partitions strictly below $\pi$ in its column.

Shade each partition in $G$ that contains the box $\bbb$. For any set of $q$ consecutive rows $\{ r_i : 0 < i \leq q \}$, we have by Theorem~\ref{thm:framesame} the equality of multisets
\[
\dist(T, \bbb) = \{ \rank(\rho_i)  : 0 < i \leq q \},
\]
where $\rho_i$ is the leftmost shaded partition in row $r_i$. 
In the same way, for any set of $q$ consecutive columns $\{ c_j : 0 \leq j < q \}$, we have 
\[ 
\dist(\evac(T), \bbb) = \{ \rank(\gamma_j)  : 0 \leq j < q \},
\]
where $\gamma_j$ is the bottommost shaded partition in column $c_j$.

Fix $C \in \mathbb{Z}$. For $1 \leq k \leq 2q + 1$, let $d_k$ denote the diagonal line of slope one through $G$ given by $y = x - k + C$. 
For each diagonal $d_k$, let $\delta_k$ be the smallest shaded partition that lies on $d_k$.
Observe that the partitions $\delta_k$ are restricted to $q+1$ consecutive rows $\{ r_i : 0 \leq i \leq q \}$ of $G$ and to $q+1$ consecutive columns $\{ c_j : 0 \leq j \leq q \}$ of $G$. We have the equalities of multisets 
\[
\{ \rank(\rho_i)  : 0 < i \leq q \} = \{ \rank(\delta_k) :  1 < k \leq 2q+1, \rank(\delta_k) = \rank(\delta_{k-1}) -1 \}
\]
and
\[
\{ \rank(\gamma_j)  : 0 \leq j < q \} = \{ \rank(\delta_k) : 1 \leq k < 2q+1, \rank(\delta_k)  = \rank(\delta_{k+1}) - 1 \}.
\]

Now construct a lattice path $P$ in the first quadrant of the plane by plotting the points $(k, \rank(\delta_k))$ for $1 \leq k \leq 2q + 1$ and connecting the vertex $(k, \rank(\delta_k))$ to the vertex $(k+1, \rank(\delta_{k+1}))$ by a line segment. Note that \[
\rank(\delta_1) = \rank(\delta_{2q+1})
\]
 by Theorem~\ref{thm:framesame}.
Moreover, \[\rank(\delta_{k+1}) = \rank(\delta_k) \pm 1\] for all $k$. Hence for any positive integer $h$, the number of $k$ in the interval $[2, 2q+1]$  with $\rank(\delta_k) = h = \rank(\delta_{k-1}) - 1$ equals the number of $k$ in the interval $[1, 2q]$ with  $\rank(\delta_k) = h = \rank(\delta_{k+1}) - 1$.
This proves that 
\[
\{ \rank(\rho_i)  : 0 < i \leq q \} = \{ \rank(\gamma_j)  : 0 \leq j < q \}
\] and the proposition follows.
\end{proof}

\begin{example}\label{ex:shading}
Extending Example~\ref{ex:growth_diagram}, let $\bbb$ be the shaded box in \[
T = \ytableaushort{1245,345{*(SkyBlue) 8},4679,68{10}{11}}.
\]
Then we shade the $K$-theoretic growth diagram of $T$ as 
\[ \ddots \phantom{35246427572435135234544523456222} \]
\[\ytableausetup{boxsize=4mm}
\scalebox{0.35}{$\begin{array}{llllllllllllllllllllllll}
\emptyset & \ydiagram{1} & \ydiagram{2} & \ydiagram{2,1} & \ydiagram{3,2,1} & \ydiagram{4,3,1} & \ydiagram{4,3,2,1} & \ydiagram{4,3,3,1} & \ydiagram[*(SkyBlue)]{4,4,3,2} & \ydiagram[*(SkyBlue)]{4,4,4,2} & \ydiagram[*(SkyBlue)]{4,4,4,3} & \ydiagram[*(SkyBlue)]{4,4,4,4,0} & & & & &  &  & & &  &  & & \\ 
 & \emptyset & \ydiagram{1} & \ydiagram{1,1} & \ydiagram{2,1,1} & \ydiagram{3,2,1} & \ydiagram{3,2,2,1} &\ydiagram{3,3,2,1} & \ydiagram{4,3,2,2} & \ydiagram[*(SkyBlue)]{4,4,3,2} & \ydiagram[*(SkyBlue)]{4,4,3,3} & \ydiagram[*(SkyBlue)]{4,4,4,3} & \ydiagram[*(SkyBlue)]{4,4,4,4,0} & & & &  &  & &&  &  & &\\ 
 & & \emptyset & \ydiagram{1} & \ydiagram{2,1} & \ydiagram{3,2} & \ydiagram{3,2,1} & \ydiagram{3,3,1} & \ydiagram{4,3,2,1} & \ydiagram[*(SkyBlue)]{4,4,3,1} & \ydiagram[*(SkyBlue)]{4,4,3,2} & \ydiagram[*(SkyBlue)]{4,4,4,2} & \ydiagram[*(SkyBlue)]{4,4,4,3} & \ydiagram[*(SkyBlue)]{4,4,4,4,0} & & &  &  & &&  &  & &\\ 
& & & \emptyset & \ydiagram{1} & \ydiagram{2,1} & \ydiagram{2,1,1} & \ydiagram{3,2,1} & \ydiagram{4,2,2,1} & \ydiagram{4,3,2,1} & \ydiagram{4,3,2,2} & \ydiagram[*(SkyBlue)]{4,4,3,2} & \ydiagram[*(SkyBlue)]{4,4,3,3} & \ydiagram[*(SkyBlue)]{4,4,4,3} & \ydiagram[*(SkyBlue)]{4,4,4,4,0} & &  &  & & &  &  & &\\ 
& & &  & \emptyset  & \ydiagram{1}  & \ydiagram{1,1}  & \ydiagram{2,1}  & \ydiagram{3,2,1}  & \ydiagram{3,3,1}  & \ydiagram{3,3,2,1}  & \ydiagram{4,3,3,1}  & \ydiagram{4,3,3,2}  & \ydiagram[*(SkyBlue)]{4,4,3,2}  & \ydiagram[*(SkyBlue)]{4,4,4,3} & \ydiagram[*(SkyBlue)]{4,4,4,4,0} &  &  & & &  &  & & \\ 
& & & \scalebox{2.5}{$d_1$} & \scalebox{2.5}{$d_2$}  & \emptyset  & \ydiagram{1}  & \ydiagram{2}  & \ydiagram{3,1}  & \ydiagram{3,2}  & \ydiagram{3,2,1}  & \ydiagram{4,3,2}  & \ydiagram{4,3,2,1}  & \ydiagram[*(SkyBlue)]{4,4,2,1}  & \ydiagram[*(SkyBlue)]{4,4,3,2} & \ydiagram[*(SkyBlue)]{4,4,4,3} &\ydiagram[*(SkyBlue)]{4,4,4,4,0} &  & & &  &  & &\\ 
& & &  & \scalebox{2.5}{$d_3$}  & \scalebox{2.5}{$d_4$}  & \emptyset  & \ydiagram{1}  & \ydiagram{2,1}  & \ydiagram{2,2}  & \ydiagram{2,2,1}  & \ydiagram{3,2,2}  & \ydiagram{3,2,2,1}  & \ydiagram{4,3,2,1}  &\ydiagram{4,3,3,2} & \ydiagram[*(SkyBlue)]{4,4,3,3} & \ydiagram[*(SkyBlue)]{4,4,4,3}  & \ydiagram[*(SkyBlue)]{4,4,4,4,0}  & & &  &  & &\\ 
& & &  &  & \scalebox{2.5}{$d_5$}  & \scalebox{2.5}{$d_6$}  & \emptyset  & \ydiagram{1}  & \ydiagram{2,1} & \ydiagram{2,1,1}  & \ydiagram{3,2,1}  & \ydiagram{3,2,1,1}  & \ydiagram{4,3,1,1}  & \ydiagram{4,3,2,1} & \ydiagram[*(SkyBlue)]{4,4,3,2} & \ydiagram[*(SkyBlue)]{4,4,4,2}  & \ydiagram[*(SkyBlue)]{4,4,4,3}  & \ydiagram[*(SkyBlue)]{4,4,4,4,0} & &  &  & &\\ 
& & &  &  &  & \scalebox{2.5}{$d_7$}  & \scalebox{2.5}{$d_8$}  & \emptyset & \ydiagram{1} & \ydiagram{1,1}  &\ydiagram{2,1}  & \ydiagram{2,1,1}  & \ydiagram{3,2,1}  & \ydiagram{3,2,2} & \ydiagram{4,3,2,1} & \ydiagram[*(SkyBlue)]{4,4,3,1}  & \ydiagram[*(SkyBlue)]{4,4,3,2}  & \ydiagram[*(SkyBlue)]{4,4,4,3} & \ydiagram[*(SkyBlue)]{4,4,4,4,0} &  &  & &\\ 
& & &  &  &  &  & \scalebox{2.5}{$d_9$}  & \scalebox{2.5}{$d_{10}$}  & \emptyset  & \ydiagram{1} &  \ydiagram{2} &  \ydiagram{2,1} & \ydiagram{3,2}  & \ydiagram{3,2,1} & \ydiagram{4,3,1,1} & \ydiagram[*(SkyBlue)]{4,4,2,1}  & \ydiagram[*(SkyBlue)]{4,4,2,2}  & \ydiagram[*(SkyBlue)]{4,4,3,2} & \ydiagram[*(SkyBlue)]{4,4,4,3} & \ydiagram[*(SkyBlue)]{4,4,4,4,0}  &  & &\\ 
& & &  &  &  &  &  & \scalebox{2.5}{$d_{11}$}  & \scalebox{2.5}{$d_{12}$}  & \emptyset  & \ydiagram{1}  & \ydiagram{1,1}  & \ydiagram{2,1}  & \ydiagram{2,1,1} & \ydiagram{3,2,1,1} & \ydiagram{4,3,2,1}  & \ydiagram{4,3,2,2}  & \ydiagram{4,3,3,2} & \ydiagram[*(SkyBlue)]{4,4,3,3} &  \ydiagram[*(SkyBlue)]{4,4,4,3} & \ydiagram[*(SkyBlue)]{4,4,4,4,0}  & &\\ 
& & &  &  &  &  &  &  & \scalebox{2.5}{$d_{13}$}  & \scalebox{2.5}{$d_{14}$}  & \emptyset  & \ydiagram{1}  & \ydiagram{2}  & \ydiagram{2,1} & \ydiagram{3,2,1} & \ydiagram{4,3,2}  & \ydiagram{4,3,2,1}  & \ydiagram{4,3,3,1} & \ydiagram[*(SkyBlue)]{4,4,3,2} & \ydiagram[*(SkyBlue)]{4,4,4,2}  & \ydiagram[*(SkyBlue)]{4,4,4,3}  & \ydiagram[*(SkyBlue)]{4,4,4,4,0} &\\ 
& & &  &  &  &  &  &   &  & \scalebox{2.5}{$d_{15}$}  & \scalebox{2.5}{$d_{16}$}  & \scalebox{2.5}{$d_{17}$}  & \scalebox{2.5}{$d_{18}$}  & \scalebox{2.5}{$d_{19}$} & \scalebox{2.5}{$d_{20}$} & \scalebox{2.5}{$d_{21}$}  & \scalebox{2.5}{$d_{22}$}  & \scalebox{2.5}{$d_{23}$} & &  &  & &\\
\end{array}$}
\] \ytableausetup{boxsize=normal}
\[ \phantom{35246427572435135234544523456222} \ddots \]
where we have also labeled $23$ consecutive diagonals.
We then plot the following lattice path.
\[
\begin{tikzpicture}[y=.5cm, x=.5cm,font=\sffamily]
	\draw (1,0) -- coordinate (x axis mid) (23,0);
    	\draw (0,0) -- coordinate (y axis mid) (0,12);
    	\foreach \x in {1,3,...,23}
     		\draw (\x,1pt) -- (\x,-3pt)
			node[anchor=north] {\x};
    	\foreach \y in {0,2,...,12}
     		\draw (1pt,\y) -- (-3pt,\y) 
     			node[anchor=east] {\y}; 
	\node[below=0.8cm] at (x axis mid) {$k$};
	\node[rotate=90, above=2cm, left=1cm] at (y axis mid) {$\rank(\delta_k)$};

	\draw plot[mark=*, mark options={fill=black}] 
		file {LP.data};

\end{tikzpicture}
\]
Now notice that the number of right-hand endpoints of $\backslash$ steps equals the number of left-hand endpoints of $/$ steps at each height, the final key observation in the proof of Proposition~\ref{prop:dist}.
\end{example}

\subsection{Proof of Theorem~\ref{thm:homomesy}}
Consider $\bbb \in \Frame(m \times n)$ and let $T \in \inc^q(m \times n)$.
For $\mathcal{O}$ the $K$-promotion orbit of $T$, we have by Theorem~\ref{thm:framesame} that 
\[
\frac{\sum_{U \in \mathcal{O}} \wt_{\{ \bbb \}}(U)}{|\mathcal{O}|} = \frac{\sum_{i=0}^{q-1} \wt_{\{\bbb\}}(\pro^i(T))}{q}.
\]
By Proposition~\ref{prop:dist},
\[
\frac{\sum_{i=0}^{q-1} \wt_{\{\bbb\}}(\pro^i(T))}{q} = \frac{\sum_{i=0}^{q-1} \wt_{\{\bbb\}}(\pro^i(\evac(T)))}{q}.
\]
But by Theorem~\ref{thm:framesame} and Lemma~\ref{lemma:GD_facts}(c),
\[
\frac{\sum_{i=0}^{q-1} \wt_{\{\bbb\}}(\pro^i(\evac(T)))}{q} = \frac{\sum_{i=0}^{q-1} \wt_{\{\bbb\}}(\pro^{-i}(\evac(T)))}{q}   =  \frac{\sum_{i=0}^{q-1} \wt_{\{\bbb\}}(\evac(\pro^i(T)))}{q}.
\]
Finally by Proposition~\ref{prop:frame}, 
\[
\frac{\sum_{i=0}^{q-1} \wt_{\{\bbb\}}(\evac(\pro^i(T)))}{q} = \frac{\sum_{i=0}^{q-1} \wt_{\{\bbb\}}(\rot(\pro^i(T)))}{q} = \frac{\sum_{i=0}^{q-1} \left( q+1 - \wt_{\{\bbb^*\}}(\pro^i(T)) \right)}{q},
\]
where $\bbb^*$ is the image of $\bbb$ under rotating $m \times n$ by $180^\degree$. 

Hence putting these facts together, we have
\begin{align*}
\frac{\sum_{U \in \mathcal{O}} \wt_{\{\bbb, \bbb^*\}}(U)}{|\mathcal{O}|} &= \frac{\sum_{i=0}^{q-1} \left( q+1 - \wt_{\{\bbb^*\}}(\pro^i(T)) \right)}{q} + \frac{\sum_{i=0}^{q-1} \wt_{\{\bbb^*\}}(\pro^i(T))}{q} \\ 
&= \frac{\sum_{i=0}^{q-1} (q+1)}{q} = q+1.
\end{align*}

Thus for  $S$ any set of boxes in $\Frame(m \times n)$ that is fixed under $180^\degree$ rotation, we have
\[
\frac{\sum_{U \in \mathcal{O}} \wt_{S}(U)}{|\mathcal{O}|} = \frac{(q+1)|S|}{2}, 
\]
as desired. \qed


%
%

\bibliographystyle{amsalpha} 
\bibliography{frames.bib}

\end{document}